\definecolor{c0000FF}{RGB}{0,0,255}
\def\a#1{\mathfrak{#1}}
\theoremstyle{definition}
\newtheorem{defn}{Definition}[section]
\theoremstyle{plain}
\newtheorem{thm}[defn]{Theorem}%[section]
\newtheorem{lem}[defn]{Lemma}%[section]
\title{General normal forms for any additive logic}
\author{Mohamed Khaled\thanks{khaled.mohamed@renyi.mta.hu}}
\affil{Alfr\'ed R\'enyi Institute of Mathematics, Hungarian Academy of Sciences, Budapest, Hungary}
\affil{Department of Mathematics, Faculty of Science, Cairo University, Giza, Egypt}
\date{}
\newcommand\myeq{\mathrel{\stackrel{\makebox[0pt]{\mbox{\normalfont\tiny def}}}{=}}}
\begin{document}
\maketitle
\begin{abstract}
In this article, we define general normal forms for any logic that has propositional part and whose non-propositional connectives distribute over the finite disjunctions. We do not require the non-propositional connectives to be closed on the set of formulas, so our normal forms cover logics with partial connectives too. We also show that most of the known normal forms in the literature are in fact particular cases of our general forms. These general normal forms are natural improvement of the distributive normal forms of J. Hintikka \cite{hintikka} and their modal analogues, e.g. \cite{anderson} and \cite{fine}.
\end{abstract}
\section{Introduction}
It was shown that every propositional formula can be rewritten equivalently in the disjunctive normal form. Such form is a disjunction of one or more conjunctive clauses, each of these clauses consists of statement letters or negations of statement letters. Similar forms were introduced for more complex logics, e.g. distributive normal forms for first order logic \cite{hintikka}. Normal forms were also extended to some modal logics and then they were used to give elegant and constructive proofs for many standard results, see \cite{anderson} and \cite{fine}. Recently, the method of normal forms was used in new directions to solve problems where all the standard techniques fail, e.g. \cite{van}, \cite{myrsl}, \cite{myphd} and \cite{myigpl}. 

Here, we generalize the normal forms in a novel way. We prove a general theorem that can be applied for arbitrary logics that extend the propositional one. But, we also require the non-propositional connectives to be distributive over the finite disjunctions. For each $k\in\omega$, we define a set of formulas that we call {\underline{normal forms of degree $k$}}. Each one of these forms is a conjunction of (constituents consist of) any one of the following or their negations: propositional letters, and normal forms of the first smaller degree joined together by a non-propositional connective. That is, we build a hierarchy of normal forms by considering their connections, via the non-propositional connectives, with the normal forms of the first smaller degree. Then we prove that any formula of the logic in question can be rewritten in an equivalent way as a disjunction of finitely many formulas that live in the same level of the hierarchy of the normal forms.

Having these degrees, one can view each normal form $\varphi$ of degree $k$ as a hierarchy in itself. This hierarchy consists of $k$-levels, its first level consists of a unique element that represents $\varphi$. Then, to go up one level, one needs to consider brand new elements to represent the normal forms that appeared together joined by a non-propositional connective and without negation symbol in the conjuncts of $\psi$, where $\psi$ is a normal form represented by an element in the current level of the hierarchy. Thus, this hierarchy will end with a level that contains elements represent forms of degree $0$ only. Such hierarchy (or its modified versions) was shown, in most of the cases, to be a fair witness for the validity of the normal form $\varphi$.

Thus, since such hierarchies are finite, one may obtain some decidability results through the finite model property \cite{fine,myigpl}. Also these hierarchies can provide easy and transparent proofs for some completeness results \cite{fine}. Moreover, these finite hierarchies associated with the normal forms can be extended by adding more levels. With a careful choice of such extensions one might be able to investigate the so-called G\"odel's incompleteness properties, e.g. \cite{myigpl}. These properties talk about the possibilities of refining the definable concepts by adding more conjuncts to the formulas defining them. These properties were defined by I. N\'emeti in 1985 and named by him in appreciation of the pioneering work of K. G\"odel. For more details concerning these incompleteness properties, see \cite{nem86,zalan,myphd,myrsl,myigpl}.

Normal forms are essentially used in establishing results in both algebra and logic, e.g. \cite{sagi99}, \cite{sagi01} and \cite{sagi02}. In the application section, we apply our general theorem to produce normal forms for Boolean algebras with operators. These `algebraic' normal forms were used recently to give an answer for a problem that was raised by I. N\'emeti in 1985. Indeed, the non-atomicity of the free algebras of several classes of algebras of logics was shown (c.f. \cite{myphd}, \cite{myrsl} and \cite{myigpl}). Moreover, normal forms were also used to give a complete description for some of these free algebras from the point of view of atoms \cite{myphd}.

One more feature of our result is, according to our definition of arbitrary additive logics, that the non-propositional connectives can be partial operators on the set of formulas, i.e., they do not need to be closed. That allows us to apply our normal forms to several important logics, e.g. guarded fragment of first order logic \cite{andvannem}, the loosely guarded fragment \cite{vB97} and the packed fragment \cite{mar01}. Those are versions of first order logic in which the quantifiers are bounded and can not be applied to any formula. These logics were investigated by many logicians and it was shown that they have a number of desirable properties, e.g. decidability and finite model property. They have applications in linguistics (dynamic semantics of natural language) and computer science.
\section{Additive logics}
In the present paper, an arbitrary logic is a tuple $\mathcal{L}=\langle Fm,\Vdash\rangle$, where $Fm$ is the set of formulas of $\mathcal{L}$ and $\Vdash$ is a unary relation on the set of formulas. Instead of writing $\varphi\in\Vdash$, we rather write $\Vdash\varphi$, for any formula $\varphi\in Fm$. This unary relation could be the set of provable formulas, the set of valid formulas, or may be something else. Let $P$ be a non-empty set of propositions and let $Cn$ be a set of connectives each of which is associated with a non-zero finite rank. We say that $\mathcal{L}$ is generated by $P$ and $Cn$ if $Fm$ is built up recursively from the set $P$ using the connectives in $Cn$, i.e.,
\begin{enumerate}[]
\item A string $\varphi$ of propositions and connectives is a formula of $\mathcal{L}$ only if $\varphi\in P$ or $\varphi$ is of the form
$\varphi=\Box(\psi_0,\ldots,\psi_{k-1})$ for some $\Box\in Cn$ whose rank is $k$ and some $\psi_0,\ldots,\psi_{k-1}\in Fm$.
\end{enumerate}
Suppose that $\mathcal{L}$ is generated by $P$ and $Cn$. Let $\Box\in Cn$ be a connective, say of rank $k$, and let $\psi_0,\ldots,\psi_{k-1}\in Fm$. Note that $\Box(\psi_0,\ldots,\psi_{k-1})$ is not necessarily a formula of $\mathcal{L}$. Define $$D(\Box)\myeq\{(\psi_0,\ldots,\psi_{k-1})\in{^kFm}:\Box(\psi_0,\ldots,\psi_{k-1})\text{ is a formula in }Fm\}.$$
We say that $\mathcal{L}$ has a propositional part if, in addition to above, it satisfies the following:
\begin{enumerate}[(P1)]
\item The usual propositional connectives $\lor$, $\land$ and $\lnot$ are members of $Cn$. Moreover, for any formulas $\varphi,\psi\in Fm$,
\begin{itemize}
\item $\lnot\varphi\myeq\lnot(\varphi)$ is a formula in $Fm$.
\item $\varphi\lor\psi\myeq\lor(\varphi,\psi)$ is a formula in $Fm$.
\item $\varphi\land\psi\myeq\land(\varphi,\psi)$ is a formula in $Fm$.
\end{itemize}
In other words, $D(\lnot)=Fm$, $D(\lor)={^2Fm}$ and $D(\land)={^2Fm}$. As usual, we define the derived connectives $\varphi\rightarrow\psi\myeq\lnot\varphi\lor\psi$ and $\varphi\leftrightarrow\psi\myeq(\varphi\rightarrow\psi)\land(\psi\rightarrow\varphi)$.
\item For each $\Box\in Cn$ whose rank is $h$ and $\varphi_0,\psi_0,\ldots,\varphi_{h-1},\psi_{h-1}\in Fm$,
\begin{eqnarray*}
&&\text{if }(\varphi_0,\ldots,\varphi_{h-1})\in D(\Box)\ \text{ and }\ (\psi_0,\ldots,\psi_{h-1})\in D(\Box)\text{ then }\ \ \ \ \ \ \ \ \ \ \ \ \ \ \ \ \ \ \\
&&\ \ \ \ \ \ \ \ \ \ \Big[\bigwedge\{\Vdash \varphi_i\longleftrightarrow\psi_i: i\in h\} \implies \Vdash \Box(\varphi_0,\ldots,\varphi_{h-1})\longleftrightarrow \Box(\psi_0,\ldots,\psi_{h-1})\Big].
\end{eqnarray*}
\item For each non-propositional connective $\Box\in Cn'\myeq Cn\setminus\{\lnot,\lor,\land\}$, say of rank $h$, and formulas $\varphi_0,\ldots,\varphi_{h-1}\in Fm$ with $(\varphi_0,\ldots,\varphi_{h-1})\in D(\Box)$,
    $$\bigvee\{\Vdash \lnot\varphi_i: i\in h\} \implies\Vdash\lnot\Box(\varphi_0,\ldots,\varphi_{h-1}).\footnote{This is a normality condition, ituitively it says that applying non-propostional connectives to a contradiction will give a contradiciton again.}$$
\item The propositional tautologies are members of the unary relation $\Vdash$.
\end{enumerate}
The above conditions are indeed relevant to what is meant in the literature for a logic to have a propositional part. Now, we define what we mean by an additive logic.
\begin{defn}\label{additive}
An additive logic $\mathcal{L}=\langle Fm, \Vdash\rangle$ is an arbitrary logic that has propositional part and satisfies the following condition:
\begin{enumerate}[(Add)]
\item For any non-propositional connective $\Box\in Cn'$, say of rank $k$, any $i\in k$ and any formulas $\varphi_0,\ldots,\varphi_{i-1},\varphi,\psi,\varphi_{i+1},\ldots,\varphi_{k-1}$, if $(\varphi_0,\ldots,\varphi_{i-1},\varphi\lor\psi,\varphi_{i+1},\ldots,\varphi_{k-1})$, $(\varphi_0,\ldots,\varphi_{i-1},\varphi,\varphi_{i+1},\ldots,\varphi_{k-1})$ and $(\varphi_0,\ldots,\varphi_{i-1},\psi,\varphi_{i+1},\ldots,\varphi_{k-1})$ are all in the domain $D(\Box)$ then
    \begin{eqnarray*}
   \Vdash\Box(\varphi_0,\ldots,\varphi_i,\varphi\lor\psi,\varphi_{i+1},\ldots,\varphi_{k-1})\leftrightarrow && \Box(\varphi_0,\ldots,\varphi_{i-1},\varphi,\varphi_{i+1},\ldots, \varphi_{k-1})\lor\\
   &&\Box(\varphi_0,\ldots,\varphi_{i-1},\psi,\varphi_{i+1},\ldots,\varphi_{k-1})
    \end{eqnarray*}
\end{enumerate}
\end{defn}
Most of the important logics, together with their set of provable formulas or their set of valid formulas, are in fact additive, e.g. propositional logic, first order logic (with or without equality), guarded fragment of first order logic \cite{andvannem} and its liberal versions \cite{vB97,mar01}, non-permutable first order logic \cite{richard,nem86}, first  order logic with general assignment models \cite{nem86}, modal logics, etc. However, there are few examples of logics that are not additive: logics that do not have propositional part, e.g. intuitionistic logic, and logics that have propositional part but they are not additive, e.g. instantial neighbourhood logic \cite{van}.
\begin{defn}\label{system}
Let $\mathcal{L}=\langle Fm, \Vdash\rangle$ be an additive logic. We say that $\mathcal{L}$ has a domain-representation system if there is a quadrable $(V,\iota,\jmath_1,\jmath_2)$, where $V$ is an arbitrary non-empty set, and $\iota:Fm\rightarrow\mathcal{P}(V)$ and $\jmath_1,\jmath_2:Cn'\rightarrow\mathcal{P}(V)$ are arbitrary maps such that the following conditions are true for any connective $\Box \in Cn'$ of rank $h\in\omega$ and any formulas $\varphi,\psi,\varphi_0,\ldots,\varphi_{h-1}\in Fm$:
\begin{enumerate}[(a)]
\item $\iota(\lnot\varphi)=\iota(\varphi)$, $\iota(\varphi\land\psi)=\iota(\varphi)\cup\iota(\psi)$ and $\iota(\varphi\lor\psi)=\iota(\varphi)\cup\iota(\psi)$.
\item $\iota(\Box(\varphi_0,\ldots, \varphi_{h-1}))=\jmath_2(\Box)\setminus\jmath_1(\Box)$.
\item\label{c} $(\varphi_0,\ldots,\varphi_{h-1})\in D(\Box)\iff \bigwedge\{\iota(\varphi_k)\subseteq\jmath_2(\Box):k\in h\}$.
%\item $\jmath_1(\Box)\subseteq \jmath_2(\Box)$.
\end{enumerate}
\end{defn}
Suppose that every non-propositional connective is unary. The domain-representation system is acting as a ``chart of domain areas'' that tells us which formula is in the domain of which connective of $Cn'$. The set $V$ is a broadsheet paper and $\jmath_2$ draws the borders of each non-propositional connective on that paper. The formulas are also represented via $\iota$ as bounded figures on the chart. According to condition (c) above, the bounded figure represents a formula $\varphi$ has to be inside the borders of a non-propositional connective $\Box$ if and only if $\varphi\in D(\Box)$ (See Figure~\ref{fig}). Condition (b) indicates that $\jmath_1$ is there only to represent any restrictions appear on the formula $\Box(\varphi)$ (after applying the connective $\Box$).

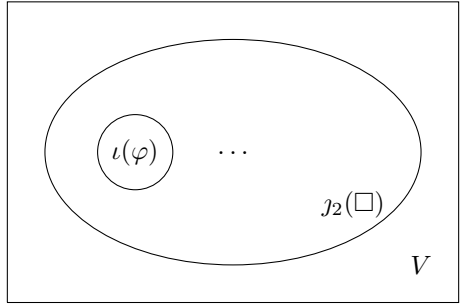
\begin{figure}[!h]
\centering
\begin{tikzpicture}
\centering    % Then we draw the rings
\draw (-3,-2) -- (-3,2) -- (3,2) -- (3,-2) -- (-3,-2);
\node at (2.5,-1.5) {$V$};
\draw (0,0) ellipse (2.5cm and 1.5cm);
\node at (1.6,-0.7) {$\jmath_2(\Box)$};
\draw (-1.3,0) circle (0.5cm);
\node at (-1.3,0) {$\iota(\varphi)$};
\node at (0,0) {$\ldots$};
\end{tikzpicture}
\caption{Domain-representation systems}
\label{fig}
\end{figure}

We will construct normal forms for any additive logic associated with a fixed domain-representation system. We note that there might be more than one domain-representation system for the same logic. Definition \ref{system} is liberal enough to guarantee the existence of domain-representation systems for all the additive logics mentioned above. In fact, we do not know any example of an additive logic that does not have such system\footnote{We are aware that such an example can be constructed. What we meant here is that additive logics with no domain-representation systems have not been studied yet in mathematical logic.}. For example, if all the non-propositional connectives in an additive logic $\mathcal{L}$ are full operators, then we can define a domain-representation system as follows. Let $V\not=\emptyset$ be an arbitrary set. For each formula $\varphi$ of $\mathcal{L}$, define $\iota(\varphi)=V$. For each non-propositional connective $\Box$, define $\jmath_1(\Box)=\emptyset$ and $\jmath_2(\Box)=V$. It is easy to see that $(V,\iota,\jmath_1,\jmath_2)$ satisfies all the above conditions.
\section{Main result}
Fix an additive logic $\mathcal{L}=\langle Fm,\Vdash\rangle$ and a domain-representation system $(V,\iota,\jmath_1,\jmath_2)$. We note that the normal forms we construct here depend on the choice of this system. Some domain-representation systems may be suitable for a certain task, some may not. Let $\Sigma$ be a non-empty finite set of formulas and fix any enumeration of $\Sigma=\{\varphi_0,\ldots,\varphi_{k-1}\}$, where $k$ is the cardinality of the set $\Sigma$. Define %the grouped conjunction
$\bigwedge\Sigma$ and %the grouped disjunction
$\bigvee\Sigma$ as follows:
$$\bigwedge\Sigma\myeq\varphi_0\land\cdots\land\varphi_{k-1} \ \text{ and } \ \bigvee\Sigma\myeq\varphi_0\lor\cdots\lor\varphi_{k-1}.$$
This might seem ambiguous because the definitions of $\bigwedge\Sigma$ and $\bigvee\Sigma$ depend on the chosen enumeration for $\Sigma$. But, by condition (P4), one can see that considering any other enumeration will yield to an equivalent formula, up to the relation $\Vdash$. A function $\alpha\in{^{\Sigma}\{-1,1\}}$ allows us to choose, for each formula in $\Sigma$, either the formula itself or its negation, i.e. we define $\varphi^{\alpha}\myeq\varphi$ if $\alpha(\varphi)=1$ and $\varphi^{\alpha}\myeq\lnot\varphi$ otherwise. Then, we can define the formula $$\Sigma^{\alpha}\myeq\bigwedge\{\varphi^{\alpha}:\varphi\in\Sigma\}.$$

We need more conventions. Recall that $Cn'=Cn\setminus\{\lnot,\lor,\land\}$. Let $X\subseteq P$ be a finite set and let $A\subseteq V$. Define $\widetilde{X}\myeq\{p\in X:\iota(p)\subseteq A\}$. We say that $A$ is large enough for $X$ if $\widetilde{X}\not=\emptyset$. Let $\Box\in Cn'$, we say that $\Box$ is compatible with $(X,A)$ if $\jmath_2(\Box)\setminus\jmath_1(\Box)\subseteq A$ and $\jmath_2(\Box)$ is large enough for $X$. Let $Y\subseteq Cn'$ be finite. Now, we are ready to define our normal forms. For each $k\in\omega$, we will build a set $N_k(X,Y;A)$ of normal forms of degree $k$. Each $\varphi\in N_k(X,Y;A)$ is built up from the propositions in $X$, the Boolean connectives and the connectives in $Y$. Throughout the construction, we restrict ourself to the condition that $\iota(\varphi)\subseteq A$, for each $\varphi\in N_k(X,Y;A)$. Then we will prove that any formula of $\mathcal{L}$ can be rewritten in an equivalent form, up to the relation $\Vdash$, as a disjunction of normal forms of the same degree (actually in the same $N_k(X,Y;A)$).
\begin{defn}Let $X\subseteq P$ and $Y\subseteq Cn'$ be finite sets, and let $A\subseteq V$ be large enough for $X$. Let $k\in \omega$, we define the set $N_k(X,Y;A)$ (of normal forms of degree $k$) inductively as follows.
\begin{itemize}
\item Normal forms of degree $0$: $$N_0(X,Y;A)\myeq\{(\widetilde{X})^{\alpha}: \alpha\in{^{\widetilde{X}}\{-1,1\}}\}.$$
\item Normal forms of degree $k+1$: $$N_{k+1}(X,Y;A)\myeq\{(\widetilde{X})^{\alpha}\land (\overline{N_k(X,Y;A)})^{\beta}:\alpha\in{^{\widetilde{X}}\{-1,1\}}, \beta\in{^{\overline{N_k(X,Y;A)}}\{-1,1\}}\},$$
    where $\overline{N_k(X,Y;A)}$ is defined as follows. If there is at least one connective $\Box\in Y$ which is compatible with $(X,A)$, then $\overline{N_k(X,Y;A)}$ consists of all formulas of the form $\diamondsuit(\varphi_0,\ldots,\varphi_{h-1})$, where $\diamondsuit\in Y$ is a connective compatible with $(X,A)$, $h\in\omega$ is the rank of $\diamondsuit$ and $\varphi_0,\ldots,\varphi_{h-1}$ are normal forms in $N_k(X,Y;\jmath_2(\diamondsuit))$ \footnote{This guarantees that $(\varphi_0,\ldots,\varphi_{h-1})\in D(\diamondsuit)$ because of condition \eqref{c} in Definition \ref{system}.}. Otherwise, $\overline{N_k(X,Y;A)}\myeq N_k(X,Y;A)$
\item Finally, define $N(X,Y;A)\myeq\bigcup\{N_k(X,Y;A):k\in\omega\}$.
\end{itemize}
By induction on $k\in\omega$, one can prove that $\overline{N_k(X,Y;A)}\not=\emptyset$, hence the above sets of normal forms are well defined.
\end{defn}
A normal form $\varphi\in N_k(X,Y;A)$ is a formula that is built up from the set $X$ using the connectives in $Y$ together with the propositional connectives such that $k$ is the depth of the non-propositional connectives nesting in $\varphi$. Each normal form carries some information indicating its connections (via the non-propositional connectives) with the normal forms of the first smaller degree. We introduce the following notions that allow us to handle this information easily when it is needed.
\begin{defn}
Let $X\subseteq P$ and $Y\subseteq Cn'$ be finite sets, and let $A\subseteq V$ be large enough for $X$.
\begin{itemize}
\item Let $\varphi\in N_0(X,Y;A)$ be a normal form of degree $0$. Then there is $\alpha\in{^{\widetilde{X}}\{-1,1\}}$ such that $\varphi=(\widetilde{X})^{\alpha}$. Define $color(\varphi)\myeq\{p\in \widetilde{X}: \alpha(p)=1\}$.
\item Let $k\in\omega$ and let $\varphi\in N_{k+1}(X,Y;A)$. Then there are choices $\alpha\in{^{\widetilde{X}}\{-1,1\}}$ and $\beta\in{^{\overline{N_k(X,Y;A)}}\{-1,1\}}$ such that $\varphi=(\widetilde{X})^{\alpha}\land (\overline{N_k(X,Y;A)})^{\beta}$. Define
    $$color(\varphi)\myeq\{p\in \widetilde{X}: \alpha(p)=1\}.$$
    If $\Box\in Y$ is compatible with $(X,A)$ and its rank is $h\in\omega$, then define
    $$sub_{\Box}(\varphi)\myeq\{(\varphi_0,\ldots,\varphi_{h-1})\in {^hN_k(X,Y,\jmath_2(\Box))}:\beta(\Box(\varphi_0,\ldots,\varphi_{h-1}))=1\}.$$
\end{itemize}
\end{defn}
The lemma below follows immediately from the construction of $N_k(X,Y;A)$ and (P4).
\begin{lem}\label{lem}
Let $X\subseteq P$ and $Y\subseteq Cn'$ be finite sets, and let $A\subseteq V$ be large enough for $X$. Let $k\in \omega$, then we have the following: $\Vdash\bigvee N_k(X,Y;A)$, and for every normal forms $\varphi,\psi\in N_k(X,Y;A)$ if $\varphi\not=\psi$ then $\Vdash\lnot(\varphi\land\psi)$.
\end{lem}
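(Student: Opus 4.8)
The plan is to read the lemma as a fact of pure propositional logic and then quote condition~(P4). The first step is to record the common shape of a normal form. By the definition of $N_k(X,Y;A)$, a normal form of degree $0$ has the form $(\widetilde{X})^{\alpha}=\bigwedge\{p^{\alpha(p)}:p\in\widetilde{X}\}$ for some $\alpha\in{}^{\widetilde{X}}\{-1,1\}$, and a normal form of degree $k+1$ has the form $(\widetilde{X})^{\alpha}\land(\overline{N_k(X,Y;A)})^{\beta}$ for some such $\alpha$ and some $\beta\in{}^{\overline{N_k(X,Y;A)}}\{-1,1\}$. Fix the finite sequence $B_0,\dots,B_{n-1}$ of formulas obtained by listing the elements of $\widetilde{X}$ (when $k=0$), respectively of $\widetilde{X}$ followed by those of $\overline{N_{k-1}(X,Y;A)}$ (when $k\geq 1$), in the enumerations already chosen for those sets. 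Then, up to the bracketing of $\land$, every $\varphi\in N_k(X,Y;A)$ equals $\bigwedge_{i\in n}B_i^{\sigma(i)}$ for a unique $\sigma\in{}^{n}\{-1,1\}$ (unique because the sequence $B_0,\dots,B_{n-1}$ is fixed), and conversely $N_k(X,Y;A)$ contains such a formula for every $\sigma\in{}^{n}\{-1,1\}$. Some of the $B_i$ may coincide, and some may themselves be negations; this turns out to be harmless.

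Granting this shape, both assertions reduce to two standard observations about $B_0,\dots,B_{n-1}$, each of which produces a formula that is a substitution instance of a propositional tautology and hence belongs to $\Vdash$ by~(P4). For the first assertion, the disjunction $\bigvee\{\bigwedge_{i\in n}B_i^{\sigma(i)}:\sigma\in{}^{n}\{-1,1\}\}$ is the substitution image, under $q_i\mapsto B_i$, of the propositional tautology $\bigvee\{\bigwedge_{i\in n}q_i^{\sigma(i)}:\sigma\in{}^{n}\{-1,1\}\}$ (full disjunctive expansion over fresh variables $q_0,\dots,q_{n-1}$); since being a propositional tautology is insensitive to the ordering and bracketing of the $\bigvee$'s and $\bigwedge$'s and to repetitions among the disjuncts, $\bigvee N_k(X,Y;A)$ is a propositional tautology, so $\Vdash\bigvee N_k(X,Y;A)$. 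For the second assertion, if $\varphi\neq\psi$ in $N_k(X,Y;A)$ then their associated sign vectors differ, say $\sigma(i)\neq\tau(i)$ for some $i\in n$; then $\varphi\land\psi$ is a conjunction having both $B_i$ and $\lnot B_i$ among its conjuncts, so $\lnot(\varphi\land\psi)$ is again a substitution instance of a propositional tautology, and $\Vdash\lnot(\varphi\land\psi)$.

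As the preceding sentence in the paper already indicates, there is no real obstacle here; the only point that calls for a little care is the bookkeeping of the first step, namely that the building blocks in $\widetilde{X}$ and $\overline{N_{k-1}(X,Y;A)}$ need neither be pairwise distinct nor atomic. Because of this, the two propositional claims must be phrased as statements about substitution instances of tautologies --- equivalently, verified via the pointwise criterion: under any truth assignment, choose for each $B_i$ independently the sign $\sigma(i)$ that makes $B_i^{\sigma(i)}$ true --- rather than by pretending the $B_i$ are mutually independent propositional atoms. (One can repackage the whole argument as a trivial induction on $k$, but the induction hypothesis is never actually used.)
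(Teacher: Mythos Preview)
Your proof is correct and takes essentially the same approach as the paper, which merely states that the lemma ``follows immediately from the construction of $N_k(X,Y;A)$ and (P4).'' You have simply supplied the bookkeeping details the paper omits: identifying the common finite list of building blocks $B_0,\dots,B_{n-1}$, observing that each normal form is a full sign-combination over this list, and then reading both claims as substitution instances of propositional tautologies so that (P4) applies.
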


Let $\psi\in Fm$ be an arbitrary formula of $\mathcal{L}$ and let $d$ be the maximum depth of the non-propositional connectives nesting in $\psi$. We define $P(\psi)$ and $Cn(\psi)$ to be the set of all propositions and the set of all connectives from $Cn'$, respectively, that appear in $\psi$. Let $k\in\omega$, let $X\subseteq P$ and $Y\subseteq Cn'$ be finite sets, and let $E\subseteq V$. We say that $(k,X,Y,E)$ is a generator suitable for $\psi$ if $k\geq d$, $X\supseteq P(\psi)$, $Y\supseteq Cn(\psi)$ and $E\supseteq\iota(\psi)$, $A$ is large enough for $X$ and, for each $\Box\in Cn(\psi)$, $\jmath_2(\Box)$ is large enough for $X$.
\begin{thm}\label{thm}
Let $\varphi$ be any formula of $\mathcal{L}$. Then, for any generator $(k, X, Y, E)$ suitable for $\varphi$, there is a finite $\Sigma\subseteq N_k(X,Y;E)$ such that $\Vdash\varphi\longleftrightarrow\bigvee\Sigma$\footnote{Here, if $\Sigma=\emptyset$ then we define $\bigvee\Sigma$ to be a contradiction.}. Moreover, there is a finite algorithm (that does not depend on the relation $\Vdash$) which generates such $\Sigma\subseteq N_k(X,Y;E)$ for any $\varphi$ and $(k,X,Y,E)$ as above.
\end{thm}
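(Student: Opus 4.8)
The plan is to prove the statement by induction on the structure (say, the number of connectives) of $\varphi$, with the induction hypothesis quantified over \emph{all} suitable generators, so that it may be applied to subformulas together with a generator that differs from $(k,X,Y,E)$. Throughout I would use Lemma~\ref{lem}, the conditions (P2)--(P4) and (Add), and the construction invariant that $\iota(\chi)\subseteq A$ holds for every $\chi\in N_k(X,Y;A)$. The effectiveness claim will then follow automatically, since each step of the argument is a finite, explicit syntactic manipulation that never queries $\Vdash$.

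For the base case $\varphi\in P$, suitability gives $\varphi\in X$ and $\iota(\varphi)\subseteq E$, so $\varphi\in\widetilde{X}$ (relative to $E$); since by construction each $\chi\in N_k(X,Y;E)$ carries exactly one of $\varphi,\lnot\varphi$ as a conjunct, Lemma~\ref{lem} and propositional reasoning yield $\Vdash\varphi\leftrightarrow\bigvee\{\chi\in N_k(X,Y;E):\varphi\in color(\chi)\}$. For $\varphi=\lnot\psi$, $\varphi=\psi_1\lor\psi_2$ or $\varphi=\psi_1\land\psi_2$, I would keep the generator $(k,X,Y,E)$ — it remains suitable for each immediate subformula because the nesting depth and the sets $P(\cdot),Cn(\cdot),\iota(\cdot)$ do not increase under the propositional connectives — apply the induction hypothesis to write the subformulas as $\bigvee\Sigma_i$ with $\Sigma_i\subseteq N_k(X,Y;E)$, and then invoke Lemma~\ref{lem} (the degree-$k$ normal forms are, up to $\Vdash$, jointly exhaustive and pairwise exclusive) to take $\Sigma=N_k(X,Y;E)\setminus\Sigma_1$, $\Sigma=\Sigma_1\cup\Sigma_2$, $\Sigma=\Sigma_1\cap\Sigma_2$, respectively.

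The decisive case is $\varphi=\Box(\psi_0,\dots,\psi_{h-1})$ with $\Box\in Cn'$ of rank $h\ge 1$. First I would check that $(k-1,X,Y,\jmath_2(\Box))$ is a suitable generator for each $\psi_i$: $k\ge d(\varphi)\ge 1$ (as $\Box\in Cn'$), so $k-1\in\omega$ and $k-1\ge d(\psi_i)$; $P(\psi_i)\subseteq P(\varphi)\subseteq X$ and $Cn(\psi_i)\subseteq Cn(\varphi)\subseteq Y$; $\iota(\psi_i)\subseteq\jmath_2(\Box)$ since $(\psi_0,\dots,\psi_{h-1})\in D(\Box)$ and Definition~\ref{system}(c); and $\jmath_2(\Box)$ is large enough for $X$ because $\Box\in Cn(\varphi)$. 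By the induction hypothesis, $\Vdash\psi_i\leftrightarrow\bigvee\Sigma_i$ with $\Sigma_i\subseteq N_{k-1}(X,Y;\jmath_2(\Box))$. If some $\Sigma_i=\emptyset$, then $\psi_i$ is $\Vdash$-refutable, hence $\Vdash\lnot\Box(\psi_0,\dots,\psi_{h-1})$ by the normality condition (P3), and $\Sigma=\emptyset$ works; so assume every $\Sigma_i\ne\emptyset$. I would then apply (Add) one coordinate at a time, each time using (P2) to replace $\psi_i$ by $\bigvee\Sigma_i$ inside $\Box$ — legitimate because, by Definition~\ref{system}(c) and the invariant $\iota(\chi)\subseteq\jmath_2(\Box)$ for $\chi\in N_{k-1}(X,Y;\jmath_2(\Box))$, every tuple that appears in the process (with entries among the $\psi_j$, the members of the $\Sigma_j$, and their finite disjunctions) lies in $D(\Box)$ — thereby obtaining
$$\Vdash\Box(\psi_0,\dots,\psi_{h-1})\ \longleftrightarrow\ \bigvee\big\{\,\Box(\chi_0,\dots,\chi_{h-1})\ :\ (\chi_0,\dots,\chi_{h-1})\in\Sigma_0\times\cdots\times\Sigma_{h-1}\,\big\}.$$
Each $\Box(\chi_0,\dots,\chi_{h-1})$ on the right is a formula (again by Definition~\ref{system}(c)) and lies in $\overline{N_{k-1}(X,Y;E)}$, because $\Box$ is compatible with $(X,E)$: indeed $\jmath_2(\Box)\setminus\jmath_1(\Box)=\iota(\varphi)\subseteq E$ by Definition~\ref{system}(b) and suitability, and $\jmath_2(\Box)$ is large enough for $X$. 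Hence every $\mu\in N_k(X,Y;E)=N_{(k-1)+1}(X,Y;E)$ carries $\Box(\chi_0,\dots,\chi_{h-1})$ or its negation as a conjunct, so Lemma~\ref{lem} gives $\Vdash\Box(\chi_0,\dots,\chi_{h-1})\leftrightarrow\bigvee\{\mu\in N_k(X,Y;E):(\chi_0,\dots,\chi_{h-1})\in sub_{\Box}(\mu)\}$. Substituting into the displayed equivalence and collecting, $$\Sigma\myeq\bigcup\big\{\,\{\mu\in N_k(X,Y;E):(\chi_0,\dots,\chi_{h-1})\in sub_{\Box}(\mu)\}\ :\ (\chi_0,\dots,\chi_{h-1})\in\Sigma_0\times\cdots\times\Sigma_{h-1}\,\big\}$$ is a finite subset of $N_k(X,Y;E)$ satisfying $\Vdash\varphi\leftrightarrow\bigvee\Sigma$.

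Finally, read recursively, the construction is the promised algorithm: the base step forms a full disjunctive normal form over the propositions of $\widetilde{X}$; the Boolean steps take complements, unions and intersections inside the finite set $N_k(X,Y;E)$; and the connective step distributes $\Box$ over the finitely many disjuncts supplied by the recursive calls and then rewrites each resulting member of $\overline{N_{k-1}(X,Y;E)}$ as a disjunction over $N_k(X,Y;E)$ — none of which depends on $\Vdash$, and the recursion terminates on the structure of $\varphi$. I expect the only genuinely delicate points to be: (a) the bookkeeping of domain memberships that guarantees all the tuples one forms remain inside the relevant $D(\Box)$, so that (P2) and (Add) are legitimately applicable at each step; and (b) the observation that the degree must be decremented by exactly one upon entering a non-propositional connective, so that the box-components produced live at level $\overline{N_{k-1}}$ and thus reappear as literals inside the degree-$k$ normal forms.
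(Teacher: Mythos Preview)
Your proposal is correct and follows essentially the same approach as the paper's own proof: structural induction on $\varphi$, with the same choice of $\Sigma$ in each case (complement, union, intersection for the Boolean steps, and for $\varphi=\Box(\psi_0,\dots,\psi_{h-1})$ the switch to the generator $(k-1,X,Y,\jmath_2(\Box))$ followed by distribution via (Add) and collection through $sub_{\Box}$). If anything, you are more explicit than the paper about verifying that $(k-1,X,Y,\jmath_2(\Box))$ is suitable for each $\psi_i$, about the domain bookkeeping needed before invoking (P2) and (Add), and about the $\Sigma_i=\emptyset$ case; the paper folds the latter into its appeal to (P3).
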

\begin{proof}
Let $\varphi$ be any formula and let $(k,X,Y,E)$ be a generator suitable for $\varphi$. We use induction on the complexity of $\varphi$.
\begin{itemize}
\item Suppose that $\varphi$ is a proposition. Then we have $k\geq 0$ and $\varphi\in X\subseteq P$. Set
$$\Sigma=\{\psi\in N_k(X,Y;E): \varphi\in color(\psi)\}.$$
Then, by (P4), it is easy to see that $\Vdash\varphi\longleftrightarrow\bigvee\Sigma$.
\item Suppose that $\varphi=\varphi_1\land\varphi_2$ for some formulas $\varphi_1,\varphi_2\in Fm$. Let $j\in\{1,2\}$. Note that $(k,X,Y,E)$ is also a generator suitable for $\varphi_j$. Thus, by induction hypothesis, there is $\Sigma_j\subseteq N_k(X,Y;E)$ such that $\Vdash\varphi_j\longleftrightarrow\bigvee\Sigma_j$. By Lemma \ref{lem}, for each $\psi_1\in \Sigma_1$ and $\psi_2\in\Sigma_2$, if $\psi_1\not=\psi_2$ then $\Vdash \lnot(\psi_1\land\psi_2)$. Hence, by (P4), there is a finite set $\Sigma(=\Sigma_1\cap\Sigma_2)\subseteq N_k(X,Y;E)$ such that
    $$\Vdash\bigvee\Sigma\longleftrightarrow\bigvee\{\psi_1\land \psi_2: \psi_1\in \Sigma_1\text{ and }\psi_2\in\Sigma_2\}.$$
    Now, (P4) implies that $$\Vdash\varphi\longleftrightarrow\bigvee\{\psi_1\land \psi_2: \psi_1\in \Sigma_1\text{ and }\psi_2\in\Sigma_2\}.$$
    Therefore, $\Vdash \varphi\longleftrightarrow\bigvee\Sigma$ as desired. The induction step goes in a similar way for the disjunction $\varphi_1\lor\varphi_2$.
\item Suppose that $\varphi=\lnot\psi$ for some formula $\psi\in Fm$. Again $(k,X,Y,E)$ is a generator suitable for $\psi$. Then, by the induction hypothesis, there is a finite set of normal forms $\Sigma'\subseteq N_k(X,Y;E)$ such that $\Vdash\psi\longleftrightarrow\bigvee\Sigma'$. Let $\Sigma=N_k(X,Y;E)\setminus\Sigma'\subseteq N_k(X,Y;E)$. Therefore, by Lemma \ref{lem} and (P4), we have $\Vdash\varphi\longleftrightarrow\bigvee\Sigma$.
\item Suppose that $\varphi=\diamondsuit(\varphi_0,\ldots,\varphi_{h-1})$ for some $\diamondsuit\in Cn'$ whose rank is $h\in\omega$ and some formulas $\varphi_0,\ldots,\varphi_{h-1}$ such that $(\varphi_0,\ldots,\varphi_{h-1})\in D(\diamondsuit)$. Let $j\in h$. By condition (c) in Definition \ref{system}, we have $\iota(\varphi_j)\subseteq\jmath_2(\diamondsuit)$. Hence, $(k-1,X,Y,\jmath_2(\diamondsuit))$ is a generator suitable for $\varphi_j$. Thus, by induction hypothesis, there is $\Sigma_j\subseteq N_{k-1}(X,Y;\jmath_2(\diamondsuit))$ such that $\Vdash \varphi_j\longleftrightarrow\bigvee\Sigma_j$. Hence, by (P2), $\Vdash \varphi\longleftrightarrow \diamondsuit(\bigvee\Sigma_0,\ldots,\bigvee\Sigma_{h-1})$.
    Then, the additivity of $\mathcal{L}$ (Add) together with (P3) imply
    \begin{equation}\label{eq}\Vdash \varphi\longleftrightarrow \bigvee\{\diamondsuit(\varphi_0,\ldots,\varphi_{h-1}):(\forall j\in h) \varphi_j\in \Sigma_j\}.\tag{*}\end{equation}
    Note that $\jmath_2(\diamondsuit)\setminus\jmath_1(\diamondsuit)=\iota(\varphi)\subseteq A$ and by assumptions $\jmath_2(\diamondsuit)$ is compatible with $(A,X)$. So, by the construction, each disjunct in \eqref{eq} is a member of $\overline{N_{k-1}(X,Y;A)}$. Therefore, (P4) implies that $\Vdash\varphi\longleftrightarrow\bigvee\Sigma$, where
    $$\Sigma=\{\psi\in N_k(X,Y;E):sub_{\diamondsuit}(\psi)\cap (\Sigma_0\times\cdots\times\Sigma_{h-1})\not=\emptyset\}.$$
\end{itemize}
The above steps also write a finite algorithm that generates the set $\Sigma\subseteq N_k(X,Y;E)$ for any $\varphi$ and $(k,X,Y;E)$ as required.
\end{proof}
The above theorem can be explained intuitively as follows. Recall the chart of the domain areas and let $E$ be any part of the paper sheet. Then there are normal forms that are represented on the part $E$ of the chart, and these forms generate any other formula (whose elementary components are those components of the normal forms) represented inside $E$.
\section{Applications}
In this section, we apply Theorem \ref{thm} to several logics and also to Boolean algebras with operators. We start with the first known example of disjunctive normal forms.
\subsection{Propositional logic}Suppose that $\mathcal{L}$ is the propositional logic with set of propositions $P$. Let $\emptyset\not=X\subseteq P$ be a finite set of propositions, define the following set:
$$F(X)\myeq\{X^{\alpha}:\alpha\in{^X\{-1,1\}}\}.$$
%\begin{enumerate}[-]
%\item $F_0(X)\myeq\{X^{\alpha}:\alpha\in{^X\{-1,1\}}\}$.
%\item $F_{k+1}(X)\myeq\{X^{\alpha}\land(F_k(X))^{\beta}:\alpha\in{^X\{-1,1\}},\beta\in{^{F_k(X)}\{-1,1\}}\}$.
%\item $F(X)\myeq\bigcup\{F_k(X):k\in\omega\}$.
%\end{enumerate}
\begin{thm}
There is a finite algorithm that generates, for every formula $\varphi$ of the propositional logic $\mathcal{L}$ and every finite $X\subseteq P$ that contains all the propositions appearing in $\varphi$, a finite $\Sigma\subseteq F(X)$ such that $\vdash\varphi\longleftrightarrow\bigvee\Sigma$ and $\models\varphi\longleftrightarrow\bigvee\Sigma$.
\end{thm}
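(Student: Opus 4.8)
The plan is to derive this theorem as a direct instance of the general machinery in Theorem~\ref{thm}. First I would observe that propositional logic is an additive logic in the sense of Definition~\ref{additive}: the only connectives are the propositional ones $\lnot,\lor,\land$ (so $Cn'=\emptyset$), conditions (P1)--(P4) are the standard soundness/tautology facts for propositional calculus, and the additivity condition (Add) is vacuous because there are no non-propositional connectives. Taking $\Vdash$ to be either $\vdash$ (provability) or $\models$ (validity) gives an additive logic in both cases, and the completeness theorem for propositional logic tells us these two relations coincide on biconditionals, so it suffices to prove the claim for one of them.

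Next I would equip $\mathcal{L}$ with a trivial domain-representation system, exactly as described at the end of Section~2 for logics whose non-propositional connectives are all full operators: take $V$ to be any one-element set, set $\iota(\varphi)=V$ for every formula $\varphi$, and (vacuously) define $\jmath_1,\jmath_2$ on the empty set $Cn'$. Conditions (a)--(c) of Definition~\ref{system} hold trivially. With this choice, for any finite $X\subseteq P$ we have $A=V$ large enough for $X$, $\widetilde{X}=X$, and since $Cn'=\emptyset$ there is no connective compatible with $(X,A)$, so $\overline{N_k(X,\emptyset;V)}=N_k(X,\emptyset;V)$ for every $k$; a quick induction then shows $N_k(X,\emptyset;V)=\{X^\alpha:\alpha\in{}^X\{-1,1\}\}=F(X)$ for all $k\in\omega$, independently of $k$. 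In particular $N_0(X,\emptyset;V)=F(X)$.

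Then the theorem follows: given a formula $\varphi$ and a finite $X\subseteq P$ containing all propositions of $\varphi$, the tuple $(0,X,\emptyset,V)$ is a generator suitable for $\varphi$ (we have $k=0\geq d=0$ since $\varphi$ has no non-propositional nesting, $X\supseteq P(\varphi)$, $\emptyset\supseteq Cn(\varphi)=\emptyset$, $V\supseteq\iota(\varphi)$, and the largeness conditions hold), so Theorem~\ref{thm} produces, via its finite algorithm, a finite $\Sigma\subseteq N_0(X,\emptyset;V)=F(X)$ with $\vdash\varphi\longleftrightarrow\bigvee\Sigma$. Soundness gives $\models\varphi\longleftrightarrow\bigvee\Sigma$ as well (or one runs the same argument with $\Vdash{}={}\models$). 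The algorithm is the specialization of the algorithm in Theorem~\ref{thm} to this setting.

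I do not anticipate a genuine obstacle; the only points requiring a little care are the bookkeeping verification that the trivial domain-representation system satisfies Definition~\ref{system} and the small induction identifying $N_k(X,\emptyset;V)$ with $F(X)$ for every $k$ (so that degree $0$ already suffices). Everything else is an immediate unwinding of the definitions, which is exactly the point of presenting propositional disjunctive normal forms as the first, simplest illustration of Theorem~\ref{thm}.
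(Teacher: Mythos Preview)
Your approach is essentially identical to the paper's: set up the trivial domain-representation system with $Cn'=\emptyset$, observe that $(0,X,\emptyset,V)$ is a suitable generator and that $N_0(X,\emptyset;V)=F(X)$, and invoke Theorem~\ref{thm}. One small inaccuracy worth correcting: your parenthetical claim that $N_k(X,\emptyset;V)=F(X)$ for \emph{all} $k$ is false as a literal set identity (for $k\geq 1$ the forms are $X^{\alpha}\land(N_{k-1})^{\beta}$, which are syntactically longer than the elements of $F(X)$), but this does not affect your argument since you only use the case $k=0$.
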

\begin{proof}
Let $V\not=\emptyset$ be any set and define $\iota:Fm\rightarrow\mathcal{P}(V)$ such that $\iota(\varphi)=V$ for every $\varphi\in Fm$. Note that $Cn'$ in this case is empty, so we can choose $\jmath_1=\jmath_2=\emptyset$. Thus, $(V,\iota,\jmath_1,\jmath_2)$ is a domain-representation system for $\mathcal{L}$. Moreover, $(0,X,\emptyset,V)$ is suitable generator for $\varphi$ and $F(X)=N_0(X,\emptyset;V)$. Therefore, we are done by Theorem \ref{thm}.
\end{proof}
The distinction between the provability relation and the validity relation of propositional logic above is superfluous, indeed it is well known that propositional logic is (strongly) sound and complete. Now, we will give more interesting examples. We will construct normal forms for several version of first order logic that keeps the propositional axioms and the distributivity of the existential quantifiers over finite disjunctions\footnote{These normal forms are in fact the distributive normal forms of J. Hintikka \cite{hintikka}}. For convenience, we consider first order language $L$ that contains only variables $VAR$ (we also assume that $\mid VAR\mid\geq 2$) and relation symbols $REL$ (may or may not contain the equality symbol $=$). The connectives are the propositional ones $\lnot,\lor,\land$ together with the existential quantifiers $\exists v$. We will also suppose a relation $\Vdash$ on the set $Fm$ of formulas of the first order formulas (on the language $L$) that makes $\mathcal{L}=\langle Fm,\Vdash\rangle$ additive in the sense of Definition \ref{additive}\footnote{For instance, $\Vdash$ can be the set of provable formulas of the standard or the non permutable first order logic, also $\Vdash$ can be the set of valid formulas of first order logic over standard models or general assignments models.}.
\subsection{First order-like logics}
For any $X\subseteq VAR$ and any $Y\subseteq REL$, let $P(X,Y)$ be the set of all atomic formulas that use variables from $X$ and relation symbols form $Y$. To define the normal forms, we need to restrict ourselves to some finite subset of the language. If $X\subseteq VAR$ and $Y\subseteq REL$ are both finite, then $P(X,Y)$ is also finite.
\begin{defn}
Let $X\subseteq VAR$ and $Y\subseteq REL$ be finite subsets such that $P(X,Y)\not=\emptyset$, and let $k\in\omega$. Define the following inductively:
\begin{enumerate}[-]
\item $F_0(X,Y)\myeq\{P(X,Y)^{\alpha}:\alpha\in{^{P(X,Y)}\{-1,1\}}\}$.
\item $F_{k+1}(X,Y)\myeq\{P(X,Y)^{\alpha}\land(\overline{F_k(X,Y)})^{\beta}:\alpha\in{^{P(X,Y)}\{-1,1\}},\beta\in{^{\overline{F_k(X,Y)}}\{-1,1\}}\}$,

\noindent where if $X=\emptyset$ then $\overline{F_k(X,Y)}=F_k(X,Y)$, otherwise $$\overline{F_k(X,Y)}=\{\exists v \ \varphi: v\in X\text{ and }\varphi\in F_k(X,Y)\}.$$
%\item $F(X,Y)\myeq\bigcup\{F_k(X,Y):k\in\omega\}$.
\end{enumerate}
\end{defn}
\begin{thm}\label{FOL}
There is a finite algorithm that generates, for every first order formula $\varphi$, every $k$ bigger than or equal to the maximum depth of quantifiers nesting in $\varphi$, every finite $X\subseteq VAR$ that contains all the variables appearing in $\varphi$ and every finite $Y\subseteq REL$ that contains all relation symbols appearing in $\varphi$, a finite $\Sigma\subseteq F_k(X,Y)$ such that $\Vdash\varphi\longleftrightarrow\bigvee\Sigma$.
\end{thm}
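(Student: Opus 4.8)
The plan is to derive Theorem~\ref{FOL} as a direct instance of Theorem~\ref{thm}, applied to the trivial domain-representation system. Since each existential quantifier $\exists v$ ($v\in VAR$) is a total unary operator on $Fm$, I would fix a one-element set $V=\{\ast\}$ and put $\iota(\psi)=V$ for every $\psi\in Fm$, together with $\jmath_1(\exists v)=\emptyset$ and $\jmath_2(\exists v)=V$ for every $\exists v\in Cn'$. Conditions (a)--(c) of Definition~\ref{system} are then immediate: (a) and (b) hold because every $\iota$-value is $V$ and $\jmath_2(\exists v)\setminus\jmath_1(\exists v)=V$, while (c) holds because $\iota(\psi)=V\subseteq V=\jmath_2(\exists v)$ always, matching the fact that $D(\exists v)=Fm$.

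Next I would translate the parameters of Theorem~\ref{FOL} into a suitable generator. Given $\varphi$, $k$, a finite $X\subseteq VAR$ containing all variables of $\varphi$, and a finite $Y\subseteq REL$ containing all relation symbols of $\varphi$ with $P(X,Y)\neq\emptyset$, I set the proposition set to $X':=P(X,Y)$, the non-propositional connective set to $Y':=\{\exists v:v\in X\}$ (a finite subset of $Cn'$), and $E:=V$. Then $(k,X',Y',E)$ is a generator suitable for $\varphi$: every atomic subformula of $\varphi$ lies in $P(X,Y)=X'$, so $P(\varphi)\subseteq X'$; every quantifier in $\varphi$ binds a variable of $\varphi$, hence of $X$, so $Cn(\varphi)\subseteq Y'$; $\iota(\varphi)=V=E$; and since $\iota$ is constantly $V$ we have $\widetilde{X'}=X'=P(X,Y)\neq\emptyset$, so $E$ and each $\jmath_2(\exists v)=V$ are large enough for $X'$.

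The remaining step is to show, by induction on $k$, that $N_k(X',Y';V)=F_k(X,Y)$. The base case is immediate from $\widetilde{X'}=X'=P(X,Y)$. For the inductive step it suffices to check $\overline{N_k(X',Y';V)}=\overline{F_k(X,Y)}$, and here the two cases of the definition of $\overline{F_k}$ correspond exactly to the two cases of the definition of $\overline{N_k}$: if $X=\emptyset$ then $Y'=\emptyset$, no connective is compatible with $(X',V)$, and both sides equal $N_k(X',Y';V)=F_k(X,Y)$ by the induction hypothesis; if $X\neq\emptyset$ then $X'\neq\emptyset$, every $\exists v\in Y'$ is compatible with $(X',V)$ (as $\jmath_2(\exists v)\setminus\jmath_1(\exists v)=V\subseteq V$ and $\jmath_2(\exists v)=V$ is large enough for $X'$), and $N_k(X',Y';\jmath_2(\exists v))=N_k(X',Y';V)=F_k(X,Y)$ by the induction hypothesis, so $\overline{N_k(X',Y';V)}=\{\exists v\,\psi:v\in X,\ \psi\in F_k(X,Y)\}=\overline{F_k(X,Y)}$. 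Feeding $N_k(X',Y';E)=F_k(X,Y)$ into Theorem~\ref{thm} produces the finite $\Sigma\subseteq F_k(X,Y)$ with $\Vdash\varphi\longleftrightarrow\bigvee\Sigma$, together with the attendant algorithm. No real difficulty is expected; the only thing that needs care is the bookkeeping---the symbol $Y$ denotes relation symbols in Theorem~\ref{FOL} but connectives in Theorem~\ref{thm}, and the degenerate case $X=\emptyset$ (which can occur only if $REL$ has nullary symbols so that $P(\emptyset,Y)\neq\emptyset$) must be handled so as to match the ``otherwise'' branch of the definition of $\overline{F_k}$.
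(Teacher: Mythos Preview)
Your proposal is correct and follows essentially the same route as the paper: choose the trivial domain-representation system with $\iota\equiv V$, $\jmath_1(\exists v)=\emptyset$, $\jmath_2(\exists v)=V$, take as propositions the atomic formulas $P(X,Y)$ and as non-propositional connectives $\{\exists v:v\in X\}$, verify that this gives a suitable generator, identify $N_k$ with $F_k$, and invoke Theorem~\ref{thm}. The paper simply asserts the identity $N_k(\mathbb{X},\mathbb{Y};V)=F_k(X,Y)$, whereas you supply the inductive verification (including the degenerate case $X=\emptyset$); apart from this extra care and your concrete choice $V=\{\ast\}$ in place of an arbitrary nonempty $V$, the arguments coincide.
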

\begin{proof}
Let $V\not=\emptyset$ be any set. For each formula $\psi$, define $\iota(\psi)=V$. For each $v\in VAR$, define $\jmath_1(\exists v)=\emptyset$ and $\jmath_2(\exists v)=V$. Then, $(V,\iota,\jmath_1,\jmath_2)$ is a domain-representation system. Let $\mathbb{X}$ be the set of all atomic formulas built up from $X$ and $Y$, and let $\mathbb{Y}=\{\exists v:v\in X\}$. Thus, $(k,\mathbb{X},\mathbb{Y},V)$ is a generator suitable for $\varphi$ and $N_k(\mathbb{X},\mathbb{Y};V)=F_k(X,Y)$. Therefore, the statement follows immediately by Theorem \ref{thm}.
%Let $\mathbb{X}=P(X,Y)$ and $\mathbb{Y}=\{\exists v:v\in X\}$. Then, we have $F_k(X,Y)=N_k(\mathbb{X},\mathbb{Y},Fm)$ if $X\not=\emptyset$ (note that $D(\exists v)=Fm$, for any $v\in VAR$) and $F_k(X,Y)=N_k(\mathbb{X},\mathbb{Y},-)$ otherwise. Therefore, the desired follows from Theorem \ref{thm}.
\end{proof}
%\subsubsection*{When the language $L$ is finite}
Suppose that $L$ is finite, that is suppose $VAR\cup REL$ is finite. Then the following is an immediate corollary of Theorem \ref{FOL}.
\begin{thm}
There is a finite algorithm that generates, for every first order formula $\varphi$ on the finite language $L$ and every $k$ bigger than or equal to the maximum depth of quantifiers nesting in $\varphi$, a finite $\Sigma\subseteq F_k(VAR,REL)$ such that $\Vdash\varphi\longleftrightarrow\bigvee\Sigma$.
\end{thm}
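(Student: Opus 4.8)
The plan is simply to instantiate Theorem~\ref{FOL} with the maximal admissible parameters $X=VAR$ and $Y=REL$. The first step is to unwind the hypothesis that $L$ is finite: this means exactly that $VAR\cup REL$ is a finite set, so in particular both $VAR$ and $REL$ are finite. Consequently, for \emph{every} first order formula $\varphi$ on $L$, the pair $(VAR,REL)$ trivially meets the requirements of Theorem~\ref{FOL}: $VAR$ is a finite subset of $VAR$ containing all variables occurring in $\varphi$, and $REL$ is a finite subset of $REL$ containing all relation symbols occurring in $\varphi$.

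Before quoting Theorem~\ref{FOL} I would check the standing side-condition in the definition of the sets $F_k(X,Y)$, namely $P(X,Y)\not=\emptyset$. Here $P(VAR,REL)$ is just the set of atomic formulas of $L$, and it is non-empty: by assumption $|VAR|\geq 2$, and $REL$ is non-empty (if $REL=\emptyset$ there are no atomic formulas, hence $Fm=\emptyset$ and the theorem holds vacuously), so at least one atomic formula exists. Hence $F_k(VAR,REL)$ is well defined for every $k\in\omega$.

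With these observations in place the statement is immediate: given $\varphi$ and any $k$ at least the maximum depth of quantifier nesting in $\varphi$, feed $\varphi$, $k$, $X=VAR$ and $Y=REL$ to the algorithm supplied by Theorem~\ref{FOL}; it returns a finite $\Sigma\subseteq F_k(VAR,REL)$ with $\Vdash\varphi\longleftrightarrow\bigvee\Sigma$. No new construction or verification is needed.

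I do not expect any genuine obstacle here. The only point worth stressing --- and the reason this corollary is worth stating separately --- is that finiteness of the language makes the bound sets of variables and relation symbols \emph{uniform}, independent of the input formula: the parametrised family $F_k(X,Y)$ collapses to the single family $F_k(VAR,REL)$ indexed only by $k$, so one fixed algorithm handles all formulas of $L$ at once. Correctness and termination are inherited verbatim from Theorem~\ref{FOL}.
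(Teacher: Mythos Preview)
Your proposal is correct and matches the paper's approach exactly: the paper states this theorem as an immediate corollary of Theorem~\ref{FOL} (without giving a separate proof), and you simply instantiate that theorem with $X=VAR$ and $Y=REL$, which is precisely the intended argument. Your extra remarks about the side-condition $P(VAR,REL)\neq\emptyset$ and the uniformity of the parameters are accurate elaborations, but nothing beyond the direct specialization is needed.
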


Another application of Theorem \ref{thm} can be the normal forms for modal logics appeared in \cite{anderson} and \cite{fine}. This can be shown easily in a similar way to the one used above. Now, we will give an example of a version of first order logic whose quantifiers are not full operators, i.e. the domains of the quantifiers are not the whole set of formulas. Namely, we will construct normal forms for the so-called guarded fragment (GF) of first order logic. The guarded fragment was introduced by H. Andr\'eka, J. van Benthem and I. N\'emeti in \cite{andvannem}. What follows can be mimicked to obtain normal forms for the more liberal guarded logics, e.g. the loosely guarded fragment and the packed fragment.
%It was investigated by many logicians and it was shown that it has a number of desirable properties, e.g. decidability through finite model property. GF also has applications in linguistics and computer science.
\subsection{Guarded fragment of first order logic}Keep the first order language $L$ that consists of $VAR$ and $REL$. Let $\bar{v}\subseteq VAR$ be finite and suppose that $\bar{v}=\{v_0,\ldots,v_{k-1}\}$, then by $\exists\bar{v}$ we mean $\exists v_0\cdots\exists v_{k-1}$. For any first order formula $\varphi$, we write $free(\varphi)$ to mean the set of all free variables in $\varphi$. The set of \emph{GF-formulas} on $L$ is defined recursively to be the smallest subset of the set of first order formulas on $L$ that satisfies the following.
\begin{enumerate}[(a)]
\item Any first order atomic formula is a GF-formula (GF-atom).
\item If $\varphi$ and $\psi$ are GF-formulas, then $\varphi\land\psi$, $\varphi\lor\psi$ and $\lnot\varphi$ are GF-formulas.
\item\label{guards} Let $\varphi$ be a GF-formula and $G$ be an atom such that $free(\varphi)\subseteq free(G)$. Then, for any finite tuple $\bar{v}\subseteq free(G)$, $\exists\bar{v}\ (G\land\varphi)$ is a GF-formula and such $G$ is called a \emph{suitable GF-guard for $\varphi$}.
\end{enumerate}
The semantics of the guarded logic GF is the standard semantics of the classical first order logic and the set of valid formulas $\models$ is defined in the usual way. We define the normal forms for GF as follows.
\begin{defn}\label{2.1.8}Let $X'\subseteq X\subseteq VAR$ and $Y\subseteq REL$ be finite subsets. Let $At$ be the set consists of all atoms built up from $X'$ and $Y$. Assume that $At\not=\emptyset$. For any $k\in\omega$, we define the following recursively.
\begin{enumerate}[-]
\item $F_0(X,Y;X')\myeq\{At^{\alpha}:\alpha\in{^{At}\{-1,1\}}\}$.
\item $F_{k+1}(X,Y;X')\myeq\{At^{\alpha}\wedge(\overline{F_k(X,Y;X')})^{\beta}:\alpha\in{^{At}\{-1,1\}}\text{ and }\beta\in{^{\overline{F_k(X,Y;X')}}\{-1,1\}}\}$,

\noindent where $\overline{F_k(X,Y;X')}$ consists of all GF-formulas of the form $\exists \bar{u}(\gamma\land\varphi)$ such that $\gamma$ is built up from $X$ and $Y$, $\varphi\in F_k(X,Y;free(\gamma))$, $\bar{u}\subseteq free(\gamma)\subseteq X$ and $free(\gamma)\setminus\bar{u}\subseteq X'$.
\item $F(X,Y;X')\myeq\bigcup\{F_{\lambda}(X,Y;X'):\lambda\in\omega\}$.
\end{enumerate}
\end{defn}
Note that every form in $F(X,Y;X')$ is a GF-formula built up from the variables in $X$ and the relation symbols in $Y$, and whose free variables belong to the set $X'$. Before we proceed, we need to define a domain-representation system. This time we do this in a different way. Let $V=VAR$ and define $\iota(\varphi)= free(\varphi)$ for every GF-formula $\varphi$. Now, the non-propositional connectives are of the form $\exists\bar{u}(\gamma\land - )$, for some finite set of variables $\bar{u}$ and an atom $\gamma$ such that $\bar{u}\subseteq free(\gamma)$. By definition, the domain of such bounded quantifier is given by $D(\exists\bar{u}(\gamma\land - ))=\{\varphi:\varphi\text{ is GF-formula and }free(\varphi)\subseteq free(\gamma)\}$. Define $\jmath_1(\exists\bar{u}(\gamma\land - ))=\bar{u}$ and $\jmath_2(\exists\bar{u}(\gamma\land - ))= free(\gamma)$. One can easily verify that $(V,\iota,\jmath_1,\jmath_2)$ is a domain-representation system for the guarded logic $GF$.
\begin{thm}\label{GF}There is a finite algorithm that generates, for every guarded formula $\varphi$, every $k$ bigger than or equal to the maximum depth of quantifiers nesting in $\varphi$, every finite $X\subseteq VAR$ that contains all the variables appearing in $\varphi$, every $\emptyset\not=X'\subseteq X$ that contains $free(\varphi)$ and every finite $Y\subseteq REL$ that contains all relation symbols appearing in $\varphi$, a finite $\Sigma\subseteq F_k(X,Y;X')$ such that $\models\varphi\longleftrightarrow\bigvee\Sigma$.
\end{thm}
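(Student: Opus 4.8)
The plan is to obtain Theorem~\ref{GF} as a direct application of the general Theorem~\ref{thm}, fed with exactly the domain-representation system introduced just above ($V=VAR$, $\iota(\psi)=free(\psi)$, $\jmath_1(\exists\bar{u}(\gamma\land-))=\bar{u}$, $\jmath_2(\exists\bar{u}(\gamma\land-))=free(\gamma)$). The first step is to confirm that this really is a domain-representation system in the sense of Definition~\ref{system}: condition (a) holds because $free$ ignores negation and takes unions over $\land$ and $\lor$; condition (b) holds because, for any GF-formula $\psi$ in the domain of $\exists\bar{u}(\gamma\land-)$ — so that the guard condition $free(\psi)\subseteq free(\gamma)$ is met — we have $\iota(\exists\bar{u}(\gamma\land\psi))=(free(\gamma)\cup free(\psi))\setminus\bar{u}=free(\gamma)\setminus\bar{u}=\jmath_2(\exists\bar{u}(\gamma\land-))\setminus\jmath_1(\exists\bar{u}(\gamma\land-))$; and condition (c) is just the definition $D(\exists\bar{u}(\gamma\land-))=\{\psi:free(\psi)\subseteq free(\gamma)\}$ rephrased as $\iota(\psi)\subseteq\jmath_2(\exists\bar{u}(\gamma\land-))$. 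One also records that $GF$ with its validity relation $\models$ is additive (Definition~\ref{additive}): the propositional part is classical, and each guarded quantifier distributes over finite disjunctions since $\land$ distributes over $\lor$ and $\exists$ distributes over $\lor$.

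Next I would pin down the finite data required by Theorem~\ref{thm}. Take $\mathbb{X}=P(X,Y)$, the finite set of all atoms built from variables in $X$ and relation symbols in $Y$; take $\mathbb{Y}$ to be the finite set of all connectives $\exists\bar{u}(\gamma\land-)$ with $\gamma\in\mathbb{X}$ and $\bar{u}\subseteq free(\gamma)$; and take $A=X'$. I would then check that $(k,\mathbb{X},\mathbb{Y},X')$ is a generator suitable for $\varphi$: $k$ dominates the quantifier nesting depth of $\varphi$ by hypothesis; each atom occurring in $\varphi$ lies in $P(X,Y)=\mathbb{X}$ and each guarded quantifier occurring in $\varphi$ has its atomic guard in $\mathbb{X}$, hence lies in $\mathbb{Y}$; $\iota(\varphi)=free(\varphi)\subseteq X'$ by hypothesis; and $X'$ is large enough for $\mathbb{X}$ because $X'\neq\emptyset$ (choosing $u\in X'$ and any relation symbol $R$, the atom $R(u,\dots,u)$ lies in $\widetilde{\mathbb{X}}$), which is just the standing assumption $At\neq\emptyset$ of Definition~\ref{2.1.8}. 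The analogous condition for the quantifiers occurring in $\varphi$ holds because the relevant value of $\jmath_2$ is $free(\gamma)$ for an atomic guard $\gamma\in\mathbb{X}$, and $\gamma$ itself witnesses that $free(\gamma)$ is large enough for $\mathbb{X}$.

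The substantive part is matching the two hierarchies of normal forms, which I would do by induction on $k$: for every finite $Z\subseteq X$ that is large enough for $\mathbb{X}$, $N_k(\mathbb{X},\mathbb{Y};Z)=F_k(X,Y;Z)$. For $k=0$ this is just the remark that $\widetilde{\mathbb{X}}$, relative to $Z$, is the set of atoms from $X$ and $Y$ all of whose variables lie in $Z$, i.e. the set $At$ attached to $F_0(X,Y;Z)$, so the sets of $\alpha$-conjunctions agree. For the step, one reads off from Definition~\ref{system} that a connective $\exists\bar{u}(\gamma\land-)\in\mathbb{Y}$ is compatible with $(\mathbb{X},Z)$ exactly when $\jmath_2(\exists\bar{u}(\gamma\land-))\setminus\jmath_1(\exists\bar{u}(\gamma\land-))=free(\gamma)\setminus\bar{u}\subseteq Z$ and $free(\gamma)$ is large enough for $\mathbb{X}$, and that its admissible arguments range over $N_k(\mathbb{X},\mathbb{Y};\jmath_2(\exists\bar{u}(\gamma\land-)))=N_k(\mathbb{X},\mathbb{Y};free(\gamma))$, which equals $F_k(X,Y;free(\gamma))$ by the induction hypothesis. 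These are exactly the requirements ``$\bar{u}\subseteq free(\gamma)\subseteq X$, $free(\gamma)\setminus\bar{u}\subseteq Z$, and $\varphi\in F_k(X,Y;free(\gamma))$'' defining $\overline{F_k(X,Y;Z)}$ in Definition~\ref{2.1.8}; hence $\overline{N_k(\mathbb{X},\mathbb{Y};Z)}=\overline{F_k(X,Y;Z)}$ and therefore $N_{k+1}(\mathbb{X},\mathbb{Y};Z)=F_{k+1}(X,Y;Z)$. Specializing to $Z=X'$ and applying Theorem~\ref{thm} then produces a finite $\Sigma\subseteq N_k(\mathbb{X},\mathbb{Y};X')=F_k(X,Y;X')$ with $\models\varphi\longleftrightarrow\bigvee\Sigma$, and the algorithm supplied by Theorem~\ref{thm} is the algorithm required here.

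I expect the only real friction to be in this last induction: one has to check, line by line, that ``compatible with $(\mathbb{X},Z)$'' is neither weaker nor stronger than the side conditions of Definition~\ref{2.1.8}, and that the parameter $\jmath_2(\exists\bar{u}(\gamma\land-))=free(\gamma)$ threaded into the next level of the $N$-hierarchy coincides with the parameter $free(\gamma)$ threaded into $F_k$ there. It is worth noting, for tidiness, that the general construction's fallback clause $\overline{N_k}=N_k$ is never triggered here, because $X'\neq\emptyset$ always produces a compatible connective (for instance $\exists\emptyset(\gamma\land-)$ with $\gamma$ an atom on variables from $X'$), which is consistent with Definition~\ref{2.1.8} having no such fallback; the degenerate situations (no relation symbols, or $0$-ary relation symbols) are excluded by the standing hypotheses and need no separate treatment.
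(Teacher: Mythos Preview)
Your proposal is correct and follows essentially the same route as the paper: use the domain-representation system $(VAR,\iota,\jmath_1,\jmath_2)$ already introduced, set $\mathbb{X}=P(X,Y)$ and $\mathbb{Y}=\{\exists\bar{u}(\gamma\land-):\gamma\in\mathbb{X},\ \bar{u}\subseteq free(\gamma)\}$, verify that $(k,\mathbb{X},\mathbb{Y},X')$ is a generator suitable for $\varphi$, identify $N_k(\mathbb{X},\mathbb{Y};X')$ with $F_k(X,Y;X')$, and invoke Theorem~\ref{thm}. The paper's own proof merely asserts the identification $N_k(\mathbb{X},\mathbb{Y};X')=F_k(X,Y;X')$ without argument, whereas you spell out the induction on $k$ and the matching of ``compatible with $(\mathbb{X},Z)$'' against the side conditions of Definition~\ref{2.1.8}; this extra care is appropriate and the verification goes through as you describe.
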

\begin{proof}
In a similar way to the one used in Theorem \ref{FOL}. Recall the domain-representation system defined above. Let $\mathbb{X}$ be the set of all atomic formulas that are constructed using the variables in $X$ and the relation symbols in $Y$. Let $\mathbb{Y}=\{\exists\bar{u}(\gamma\land - ):\bar{u}\subseteq free(\gamma)\text{ and }\gamma\in\mathbb{X}\}$. We need to show that $(k,\mathbb{X},\mathbb{Y}, X')$ is a generator suitable for $\varphi$. One can see that there is at least one relation symbol appeared in $\varphi$, this relation symbol together with the variables in $X'$ can form an atomic formula that guarantees the largeness of $X'$ for $\mathbb{X}$. Now, suppose that $\exists\bar{u}(\gamma\land - )\in Cn(\varphi)$, for some GF-guard $\gamma$ and $\bar{u}\subseteq free(\gamma)$. The guard $\gamma$ itself shows that $free(\gamma)$ is large enough for $\mathbb{X}$. It remains to note that $N_k(\mathbb{X},\mathbb{Y};X')=F_k(X,Y;X')$. Therefore, we are done by Theorem \ref{thm}.
\end{proof}
Again, the following is a corollary of Theorem \ref{GF} for the case when the language is finite, i.e. when $VAR\cup REL$ is finite.
\begin{thm}
There is a finite algorithm that generates, for every GF-formula $\varphi$ on the finite language $L$, every non-empty set of variables $X\supseteq free(\varphi)$ and every $k$ bigger than or equal to the maximum depth of quantifiers nesting in $\varphi$, a finite $\Sigma\subseteq F_k(VAR,REL;X)$ such that $\models\varphi\longleftrightarrow\bigvee\Sigma$.
\end{thm}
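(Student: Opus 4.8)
The plan is to derive this statement as a direct specialization of Theorem~\ref{GF}. Since $L$ is finite, both $VAR$ and $REL$ are finite, so for \emph{every} GF-formula $\varphi$ the set $VAR$ trivially contains all variables occurring in $\varphi$ and $REL$ trivially contains all relation symbols occurring in $\varphi$; thus the two finiteness hypotheses of Theorem~\ref{GF} on the variable set and on the relation-symbol set are automatically met, and the only genuine parameters left are $\varphi$, the set $X$, and the number $k$.

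Concretely, I would fix a GF-formula $\varphi$, a non-empty set $X$ with $free(\varphi)\subseteq X\subseteq VAR$, and a $k$ at least the maximum depth of quantifier nesting in $\varphi$. I would then invoke Theorem~\ref{GF} with its ``big'' variable set instantiated to $VAR$, its relation-symbol set instantiated to $REL$, and its distinguished free-variable set $X'$ instantiated to the given $X$. All the hypotheses of Theorem~\ref{GF} hold under this instantiation: $VAR$ is finite and contains every variable of $\varphi$, $REL$ is finite and contains every relation symbol of $\varphi$, and $\emptyset\neq X\subseteq VAR$ with $free(\varphi)\subseteq X$. Hence Theorem~\ref{GF} supplies a finite algorithm that, on input $(\varphi,k,X)$, outputs a finite $\Sigma\subseteq F_k(VAR,REL;X)$ with $\models\varphi\longleftrightarrow\bigvee\Sigma$, which is exactly the claim.

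I do not expect any real obstacle here; the whole content of the corollary is the observation that the ``restrict to a finite sublanguage'' step, which is where Theorem~\ref{GF} does its nontrivial work, becomes vacuous once $VAR\cup REL$ is already finite. If one wishes to stress why everything stays finite, it suffices to note that finiteness of $VAR\cup REL$ makes the atom set $At$ finite, so each $F_k(VAR,REL;X)$ is a finite set and the algorithm of Theorem~\ref{GF} halts on every input.
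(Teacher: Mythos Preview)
Your proposal is correct and matches the paper's approach exactly: the paper presents this theorem as an immediate corollary of Theorem~\ref{GF} obtained by taking the full (finite) $VAR$ and $REL$ as the variable and relation-symbol sets and the given $X$ as the distinguished free-variable set $X'$. No further argument is given in the paper, so your write-up is in fact more detailed than the original.
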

A slightly different version of GF, namely the solo-GF, was introduced and discussed in \cite{myphd}. The solo GF-formulas are defined analogously to the GF-formulas except that the solo GF-guarded existential quantification $\exists\bar{u}(\gamma\land - )$ is now allowed only if the block of quantifiers $\bar{u}$ is of length $\leq 1$. In \cite{myphd}, similar normal forms to the ones in Definition \ref{2.1.8} were introduced for the solo guarded fragment on a finite language. These normal forms then were used to prove that the solo GF on finite languages enjoys weak G\"odel incompleteness property but lacks G\"odel's incompleteness property (recall the fourth paragraph of the introduction). That gives more importance to the guarded logics indeed this is the first known example that differentiate the two properties for finite languages. In the following section we will show that the general normal forms for Boolean algebras with operators that was given in \cite[appendix A]{myphd} is also a special case of our general theorem.
\subsection{Boolean algebras with operators}
Let $I$ and $J$ be any two index sets and let $t=\{+,\cdot,-,0,1,f_i,d_j:i\in I\text{ and }j\in J\}$ be a similarity type such that $\{+,\cdot,-,0,1\}$ is the type of Boolean algebras, $f_i$ is an operator symbol of positive rank $n_i\myeq rank(f_i)\geq 1$ (for any $i\in I$), and $d_j$ is a constant symbol (for any $j\in J$). Let $K$ be the class of all Boolean algebras with operators of type $t$, that is the class of all algebras of type $t$, $\a{A}=\langle A,+,\cdot,-,0,1,f_i,d_j:i\in I, j\in J\rangle$, that satisfy the following:
\begin{enumerate}[(1)]
\item The Boolean part $\langle A,+,\cdot,-,0,1\rangle$ is a Boolean algebra.
\item The operators of positive ranks are additive, i.e. for any $i\in I$, any $k\in n_i$ and any $a_0,\ldots,a_{k-1},a,b,a_{k+1},\ldots,a_{n_i-1}\in A$,
    \begin{eqnarray*}
    f_i(a_0,\ldots,a_{k-1},a+b,a_{k+1},\ldots,a_{n_i-1})&=&f_i(a_0,\ldots,a_{k-1},a,a_{k+1},\ldots,a_{n_i-1})\\
    &+&f_i(a_0,\ldots,a_{k-1},b,a_{k+1},\ldots,a_{n_i-1}).
    \end{eqnarray*}
\item The operators of positive ranks are normal, i.e. for any $i\in I$, any $k\in n_i$ and any $a_0,\ldots,a_{k-1},a,a_{k+1},\ldots,a_{n_i-1}\in A$, $$a=0\implies f_i(a_0,\ldots,a_{k-1},a,a_{k+1},\ldots,a_{n_i-1})=0.$$
\end{enumerate}

Fix a set of free variables $X$ and let $T$ be the set of all terms of type $t$ generated by the variables in $X$. The set $T$ can be viewed as the set of formulas (here we call them terms) built up recursively, using the Boolean operators and the operators $(f_i:i\in I)$, from the free variables in $X$ and the constant symbols $(d_j:j\in J)$. Let $\tau,\sigma\in T$ be arbitrary terms. We write $K\models\tau=\sigma$ if, for any algebra $\a{A}\in K$ and any evaluation $\nu:X\rightarrow A$, the interpretations $[\tau]^{\a{A}}_{\nu}\in A$ and $[\sigma]^{\a{A}}_{\nu}\in A$ are equal.

Let $\Vdash$ be the set of all terms that are equal to $1$, i.e., $\Vdash=\{\tau\in T:K\models\tau=1\}$. Thus, $\langle T,\models\rangle$ is an additive logic. We note that the Boolean operators $+,\cdot$ and $-$ correspond to the propositional connectives $\lor,\land$ and $\lnot$, respectively. We also note that the equality $=$ between terms corresponds to the derived connective $\longleftrightarrow$. Let $V$ be any non-empty set. For every term $\tau\in T$, define $\iota(\tau)=V$. For every $i\in I$, define $\jmath_1(f_i)=\emptyset$ and $\jmath_2(f_i)=V$. Clearly, $(V,\iota,\jmath_1,\jmath_2)$ satisfies the conditions of Definition \ref{system}.

Let $\emptyset\not=Y\subseteq T$ be finite. Let $\prod$ and $\sum$ be the grouped versions of $\cdot$ and $+$, respectively. That is, we fix any enumeration of $Y$ and then we define $\prod Y$ and $\sum Y$ inductively according to this enumeration. The algebras in the class $K$ are Boolean algebras with operators, hence $\cdot$ and $+$ are both commutative on the elements of these algebras. Thus, it doesn't matter which enumeration we use to define $\prod$ and $\sum$, all are equal terms in the class $K$. Let $\alpha\in{{^Y}\{-1,1\}}$, define $Y^{\alpha}=\prod\{\tau^{\alpha}:\tau\in Y\}$,
where for every $\tau\in Y$, $\tau^{\alpha}=\tau$ if $\alpha(\tau)=1$ and $\tau^{\alpha}=-\tau$ otherwise. We define our normal forms as follows.

\begin{defn}Let $I'\subseteq I$, $J'\subseteq J$ and $X'\subseteq X$ be finite sets. Let $D=X'\cup\{d_j:j\in J'\}$ and suppose that $D\not=\emptyset$. For every $n\in\omega$, we define the following inductively.
\begin{enumerate}[-]
\item The normal forms of degree $0$, $F_{0}(I',J',X')\myeq\{D^{\beta}:\beta\in{^{D}\{-1,1\}}\}$.
\item The set of normal forms of degree $n+1$, $$F_{n+1}(I',J',X')\myeq\{D^{\beta}\cdot (\overline{F_n(I',J',X')})^{\alpha}:\beta\in{^{D}\{-1,1\}}\text{ and }\alpha\in{^{\overline{F_n(I',J',X')}}\{-1,1\}}\},$$

    \noindent where $\overline{F_n(I',J',X')}=\{f_i(\tau_0,\ldots,\tau_{n_i-1}):i\in I'\text{ and }\tau_0,\ldots,\tau_{n_i-1}\in F_n(I',J',X')\}$, the one-step closure of $F_n(I',J',X')$ by the operations $\langle f_i: i\in I'\rangle$.
\item The set of all forms, $F(I',J',X')\myeq\bigcup\{F_k(I',J',X'):k\in\omega\}$.
\end{enumerate}
\end{defn}
The normal forms in $F_n(I',J',X')$ are built up from the free variables in $X'$ and the constant symbols $(d_j:j\in J')$ using the operators $(f_i:i\in I')$. Moreover, every normal form is constructed based on the normal forms of the first smaller degree. Again, one can show that the following theorems are special instances of Lemma \ref{lem} and Theorem \ref{thm}.
\begin{lem}Let $I'\subseteq I$, $J'\subseteq J$ and $X'\subseteq X$ be finite sets. Let $D=X'\cup\{d_j:j\in J'\}$ and suppose that $D\not=\emptyset$. Let $n\in\omega$, then the following are true.
\begin{enumerate}[(i)]
\item\label{and1} $K\models\sum F_n(I',J',X')=1$.
\item\label{and2} For every $\tau,\sigma\in F_n(I',J',X')$, if $\tau\not=\sigma$ then $K\models\tau\cdot \sigma=0$.
%\item\label{and3} There exists an effective method to find, for every $\tau\in F_n^{m,t}$, a finite $S\subseteq F_{n+1}^{m,t}$ such that $K\models\tau=\sum S$.
%\item\label{and4} There exists an effective method to find, for every $\tau\in T$, an $k\in\omega$ and a finite $S_{\tau}\subseteq F^{m,t}_k$ such that $K\models\tau=\sum S_{\tau}$.
\end{enumerate}
\end{lem}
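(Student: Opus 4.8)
The plan is to derive this lemma as a direct specialization of the already-established Lemma~\ref{lem}, matching the data of the Boolean-algebra-with-operators setting to the abstract additive-logic setting. Recall that we have fixed the domain-representation system $(V,\iota,\jmath_1,\jmath_2)$ with $\iota(\tau)=V$ for every term $\tau$, and $\jmath_1(f_i)=\emptyset$, $\jmath_2(f_i)=V$ for every $i\in I$, and that $\langle T,\models\rangle$ is an additive logic with $\Vdash=\{\tau\in T:K\models\tau=1\}$. Under this dictionary, $+,\cdot,-$ play the roles of $\lor,\land,\lnot$, the operators $f_i$ play the role of the non-propositional connectives, and $=$ corresponds to $\leftrightarrow$.

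First I would set $X''$ to be the set of all atomic terms available from $I',J',X'$ — that is, the free variables in $X'$ together with the constant symbols $\{d_j:j\in J'\}$, which is exactly the set $D$ — and take $Y''=\{f_i:i\in I'\}\subseteq Cn'$. Then I would check that every $f_i$ with $i\in I'$ is compatible with $(D,V)$: indeed $\jmath_2(f_i)\setminus\jmath_1(f_i)=V\setminus\emptyset=V\subseteq V$, and $\jmath_2(f_i)=V$ is large enough for $D$ since $\widetilde{D}=\{p\in D:\iota(p)\subseteq V\}=D\neq\emptyset$ by hypothesis. Consequently, in the abstract construction one has $\widetilde{D}=D$ throughout and $\overline{N_k(D,Y'';V)}$ is exactly the one-step closure of $N_k(D,Y'';V)$ under the $f_i$ ($i\in I'$). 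A straightforward induction on $n$ then shows $N_n(D,Y'';V)=F_n(I',J',X')$, where the only point to note is that $D^{\beta}$ in the algebra sense (a product of signed atoms) is the same term as $(\widetilde{D})^{\alpha}$ in the abstract sense, using that $\cdot$ in $K$ is commutative so the chosen enumeration of $D$ does not matter up to $K\models\cdot=\cdot$, and similarly for the big conjunction over $\overline{N_n}$.

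Having identified the two hierarchies, claim~\eqref{and1} is immediate: Lemma~\ref{lem} gives $\Vdash\bigvee N_n(D,Y'';V)$, which unwinds to $K\models\sum F_n(I',J',X')=1$ via the translation of $\bigvee$ into $\sum$ and of membership in $\Vdash$ into equality to $1$. Likewise claim~\eqref{and2} is the translation of the second half of Lemma~\ref{lem}: for distinct $\tau,\sigma\in N_n(D,Y'';V)$ we have $\Vdash\lnot(\tau\land\sigma)$, i.e. $K\models -(\tau\cdot\sigma)=1$, which is equivalent to $K\models\tau\cdot\sigma=0$ in any Boolean algebra with operators. One should also remark that $-(\tau\cdot\sigma)=1$ and $\tau\cdot\sigma=0$ are interderivable using only the Boolean axioms, so this step does not use anything about the operators.

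The main obstacle — though it is bookkeeping rather than a genuine difficulty — is verifying cleanly that the recursive definition of $F_n(I',J',X')$ coincides term-by-term with $N_n(D,Y'';V)$, because the algebra-side definition writes $\overline{F_n}$ as the raw one-step closure $\{f_i(\tau_0,\dots,\tau_{n_i-1}):i\in I',\ \tau_j\in F_n\}$ with no side conditions, and one must confirm that in the abstract construction the compatibility requirement on $\diamondsuit$ and the constraint that the arguments lie in $N_k(X,Y;\jmath_2(\diamondsuit))$ both become vacuous here precisely because $\jmath_2(f_i)=V$ and every term has $\iota$-value $V$. Once that identification is in place, the lemma follows with no further work, so I would present the proof as: define the system and the translation, prove $N_n(D,Y'';V)=F_n(I',J',X')$ by induction on $n$, and then quote Lemma~\ref{lem}.
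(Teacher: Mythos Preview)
Your proposal is correct and takes exactly the approach the paper itself indicates: the paper does not give a separate proof but simply remarks that this lemma is a special instance of Lemma~\ref{lem} under the domain-representation system $(V,\iota,\jmath_1,\jmath_2)$ already defined for $\langle T,\models\rangle$. Your write-up in fact supplies more detail than the paper does, carrying out the identification $N_n(D,\{f_i:i\in I'\};V)=F_n(I',J',X')$ explicitly and checking the compatibility and $\widetilde{D}=D$ conditions that make the translation go through.
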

\begin{thm}
There is an effective method (finite algorithm) to find, for each $\tau\in T$, each $k\in\omega$ bigger than or equal the maximum depth of non-Boolean operators nesting in $\tau$, and each finite sets $I'\subseteq I$, $J'\subseteq J$ and $X'\subseteq X$ such that $\tau$ is built up using $X'$, $(d_j:j\in J')$, the Boolean operators and $(f_i:i\in I')$, a finite $S\subseteq F_k(I',J',X')$ such that $K\models\tau=\sum S$.
\end{thm}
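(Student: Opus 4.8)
The plan is to deduce this theorem as a direct corollary of Theorem~\ref{thm} by exhibiting the appropriate translation between the algebraic setting and the abstract additive-logic setting. First I would recall the identification already set up in the text: the additive logic in question is $\mathcal{L}=\langle T,\Vdash\rangle$ with $\Vdash=\{\tau\in T:K\models\tau=1\}$, the set of propositions is $P=X\cup\{d_j:j\in J\}$ (the free variables and the constant symbols play the role of atomic formulas, since constants have rank $0$), the non-propositional connectives are $Cn'=\{f_i:i\in I\}$, the Boolean operators $+,\cdot,-$ serve as $\lor,\land,\lnot$, and $=$ serves as $\leftrightarrow$. One checks quickly that conditions (P1)--(P4) and (Add) follow from the defining axioms (1)--(3) of the class $K$ together with the fact that every Boolean-algebra identity holding in all of $K$ corresponds to a propositional tautology. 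The domain-representation system is the trivial one described in the excerpt: $V$ arbitrary nonempty, $\iota(\tau)=V$ for all $\tau$, $\jmath_1(f_i)=\emptyset$, $\jmath_2(f_i)=V$; all of $D(f_i)={^{n_i}T}$ so condition (c) of Definition~\ref{system} is vacuously satisfied, and (a), (b) are immediate.

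Next I would match the concrete normal-form sets $F_n(I',J',X')$ with the abstract ones. Set $\mathbb{X}=X'\cup\{d_j:j\in J'\}=D$ and $\mathbb{Y}=\{f_i:i\in I'\}$. Because $\iota$ is constant equal to $V$, we have $\widetilde{\mathbb{X}}=\mathbb{X}$ for any choice of the third argument, and every $f_i\in\mathbb{Y}$ is compatible with $(\mathbb{X},V)$ (since $\jmath_2(f_i)\setminus\jmath_1(f_i)=V\subseteq V$ and $\jmath_2(f_i)=V$ is large enough for $\mathbb{X}$ as $D\neq\emptyset$). A straightforward induction on $n$ then shows $N_n(\mathbb{X},\mathbb{Y};V)=F_n(I',J',X')$: the base case $N_0=\{\mathbb{X}^\alpha:\alpha\in{^{\mathbb{X}}\{-1,1\}}\}=F_0$ is the definition, and in the inductive step $\overline{N_n(\mathbb{X},\mathbb{Y};V)}$ is exactly the set of all $f_i(\tau_0,\dots,\tau_{n_i-1})$ with $i\in I'$ and $\tau_0,\dots,\tau_{n_i-1}\in N_n(\mathbb{X},\mathbb{Y};\jmath_2(f_i))=N_n(\mathbb{X},\mathbb{Y};V)=F_n(I',J',X')$, which is precisely $\overline{F_n(I',J',X')}$; combining with the conjunction over sign functions gives $N_{n+1}=F_{n+1}$.

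Then I would verify that $(k,\mathbb{X},\mathbb{Y},V)$ is a generator suitable for $\tau$ whenever $k$ is at least the maximum nesting depth of non-Boolean operators in $\tau$ and $\tau$ is built from $X'$, $(d_j:j\in J')$, the Boolean operators and $(f_i:i\in I')$. Indeed $P(\tau)\subseteq\mathbb{X}$, $Cn(\tau)\subseteq\mathbb{Y}$, $\iota(\tau)=V\subseteq V$, and $V$ is large enough for $\mathbb{X}$ (and $\jmath_2(f_i)=V$ is large enough for $\mathbb{X}$ for each relevant $f_i$) because $D=\mathbb{X}\neq\emptyset$. Applying Theorem~\ref{thm} yields a finite $\Sigma\subseteq N_k(\mathbb{X},\mathbb{Y};V)=F_k(I',J',X')$, produced by the algorithm of Theorem~\ref{thm}, with $\Vdash\tau\leftrightarrow\bigvee\Sigma$; unwinding the dictionary, $\leftrightarrow$ is $=$, $\bigvee$ is $\sum$, and $\Vdash(\cdot)$ is ``$=1$ in $K$'', so $K\models\tau=\sum\Sigma$. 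Setting $S=\Sigma$ finishes the proof, and the algorithm is the one from Theorem~\ref{thm} transported along this translation (with the convention $\sum\emptyset$ interpreted as $0$, matching the empty-disjunction convention).

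I expect the only genuinely nontrivial point to be confirming that $\langle T,\Vdash\rangle$ really is an additive logic in the precise sense of Definition~\ref{additive} — in particular that (P2) (the congruence/replacement property for each $f_i$ with respect to $=$) and (P3) (the normality condition $\bigvee\{\Vdash\lnot\varphi_i\}\Rightarrow\Vdash\lnot f_i(\varphi_0,\dots)$, i.e. if some argument equals $0$ then $f_i(\ldots)=0$) follow from axioms (2) and (3), and that (Add) is literally axiom (2). These are routine but must be stated; everything else is bookkeeping. (This matching was already carried out in \cite[appendix A]{myphd}, and the present argument simply records that it is an instance of Theorem~\ref{thm}.)
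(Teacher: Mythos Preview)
Your proposal is correct and follows exactly the approach the paper intends: the paper's own ``proof'' is simply the sentence ``Again, one can show that the following theorems are special instances of Lemma~\ref{lem} and Theorem~\ref{thm}'', and your argument spells out precisely this instantiation (setting $\mathbb{X}=D=X'\cup\{d_j:j\in J'\}$, $\mathbb{Y}=\{f_i:i\in I'\}$, using the trivial domain-representation system, checking $N_k(\mathbb{X},\mathbb{Y};V)=F_k(I',J',X')$ by induction, and then applying Theorem~\ref{thm}). Nothing further is needed.
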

\begin{thm}
Suppose that $I$, $J$ and $X$ are all finite. Then there is a finite algorithm that generates, for each $\tau\in T$, an $k\in\omega$ and a finite $S\subseteq F_k(I,J,X)$ such that $K\models\tau=\sum S$.
\end{thm}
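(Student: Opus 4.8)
The plan is to derive this theorem as the immediate specialization of the preceding theorem to the case in which the similarity type and the supply of variables are finite. Concretely, I would invoke that theorem --- which itself is an instance of Theorem~\ref{thm} via the domain-representation system $(V,\iota,\jmath_1,\jmath_2)$ with $\iota(\tau)=V$, $\jmath_1(f_i)=\emptyset$ and $\jmath_2(f_i)=V$ --- with the ``maximal'' choice of finite parameters $I':=I$, $J':=J$ and $X':=X$. This choice is now legitimate precisely because $I$, $J$ and $X$ are finite by hypothesis.

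First I would spell out the algorithm. On input $\tau\in T$, the procedure reads off $k$, the maximum depth of non-Boolean operators nesting in $\tau$; this is a finite computation on the syntax of $\tau$. Since $T$ is by definition the set of all terms built from the variables in $X$ and the constants $(d_j:j\in J)$ using the Boolean operators together with $(f_i:i\in I)$, the tuple $(\tau,k,I,J,X)$ is an admissible input to the algorithm furnished by the preceding theorem. Running that algorithm returns a finite set $S\subseteq F_k(I,J,X)$ with $K\models\tau=\sum S$, and our algorithm outputs the pair $(k,S)$.

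Then I would verify that the finiteness hypothesis is exactly what makes the procedure halt. Tracing back through the reduction of the Boolean-algebras-with-operators case to Theorem~\ref{thm}, the relevant set $\mathbb{X}$ of atoms and the relevant set $\mathbb{Y}$ of non-propositional connectives become finite once $I$, $J$ and $X$ are finite; hence each layer $N_k(\mathbb{X},\mathbb{Y};V)=F_k(I,J,X)$ of normal forms is a finite set, and the algorithm of Theorem~\ref{thm} terminates after finitely many steps. I expect no genuine obstacle here, since the content is purely bookkeeping; the only point deserving a word is the non-degeneracy requirement $D=X\cup\{d_j:j\in J\}\neq\emptyset$ under which the forms $F_0(I,J,X)$ are defined, which holds whenever $\tau$ mentions a variable or a constant and in any case under the standing assumption $X\neq\emptyset$, so I would simply flag it in passing.
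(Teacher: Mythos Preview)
Your proposal is correct and follows exactly the line the paper intends: the paper states this theorem with no proof, simply presenting it (together with the preceding theorem) as a ``special instance'' of Theorem~\ref{thm}, and your specialization $I':=I$, $J':=J$, $X':=X$ in the preceding theorem is precisely how that instance is obtained. Your added bookkeeping about computing $k$ from $\tau$ and flagging the non-degeneracy condition $D\neq\emptyset$ is more detail than the paper gives, but it is all in the same spirit.
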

\bibliographystyle{plain}

\end{document}